\newtheorem{cor}{Corollary}
\newtheorem{prop}{Proposition}
\newtheorem{defn}{Definition}
\newtheorem{rem}{Remark}
\begin{document}

\title{Modular framization of the BMW algebra}

\author{J. Juyumaya}
\address{Departamento de Matem\'aticas, Universidad de Valpara\'{\i}so \\
Gran Breta\~na 1091, Valpara\'{\i}so, Chile.}
\email{juyumaya@uvach.cl}

\author{S. Lambropoulou}
\address{ Departament of Mathematics,
National Technical University of Athens,
Zografou campus, GR-157 80 Athens, Greece.}
\email{sofia@math.ntua.gr}
\urladdr{http://www.math.ntua.gr/$\tilde{~}$sofia}

\thanks{Both authors were partially supported by Fondecyt 1085002, NTUA and  Dipuv.}

\keywords{knots, links, braids, adelic, Yokonuma-Hecke algebras, Markov trace, isotopy invariants.}

\keywords{modular framization, BMW algebra, Yokonuma--Hecke algebra, Temperley--Lieb algebra, singular Hecke algebra, $B$--type Hecke algebras.}

\subjclass{57M27, 20C08, 20F36}

\date{}
\maketitle

\section{Introduction}

In this work we introduce  the concept of Modular Framization or simply Framization.  The  idea  originates from  the Yokonuma--Hecke algebra, shortened to Y--H algebra,  built from the classical Hecke algebra by framization. That is, the Y--H algebra is built by adding framing generators to the Hecke algebra and changing the Hecke algebra quadratic relation by a  new quadratic relation which involves the framing generators. Using then the Y--H algebras and a Markov trace constructed on them\cite{ju} we can  produce invariants for oriented framed knots\cite{jula1,jula3},  classical knots\cite{jula4} (which satisfies a cubic skein relation), and  singular knots\cite{jula2}. Moreover, we have connections of the Y--H algebras with virtual knots and transversal knot theory.

All these results invite us to try and apply the framization mechanism on other knot algebras, that is, algebras having  generators whose behavior involves braid relations and polynomial relations. For example, the Temperley--Lieb algebra, the BMW algebra, the singular Hecke algebra introduced by Paris and Rabenda, the  Hecke algebra of $B$ type, et cetera.

Let  $d$ and $n$ be  two positive integers.
The aim of this note  is to construct a framization $F_{d,n}$ of the Birman--Wenzl--Murakami algebra, also known as  BMW algebra,  and start a systematic study of this framization. We show that $F_{d,n}$ is finite dimensional and the \lq braid generators\rq\ of this algebra satisfy a
 quartic relation which is of minimal degree not containing the generators $t_i$. They also satisfy a quintic relation, as the smallest closed relation. We conjecture that the algebras $F_{d,n}$ support a Markov trace which allow to define  polynomial invariants for unoriented knots  in an analogous way that the Kauffman polynomial is derived from the BMW algebra.

in Section~3, we give some more knot algebras on which the framization can be applied, such as the Temperley--Lieb algebra, the singular Hecke algebra and   Hecke algebras of type $B$.

\section{The modular framization of BMW}

Let $K$ denote the field of rational functions ${\Bbb C}(l,m)$, where $l$ and $m$ are two unspecified parameters.  For any natural number $n$, J.~ Birman  and H.~Wenzl \cite{bw} and, simultaneously but independently, J.~Murakami \cite{mu}  defined  a unital associative $K$--algebra of two parameters, $C_n=C_n(l,m)$, which is known as the Birman--Wenzl--Murakami algebra or, simply, the BMW algebra. The algebra $ C_n$ with unity $1$ is defined by  two sets of generators, $g_1, \ldots , g_{n-1}$ and $h_1, \ldots , h_{n-1}$, satisfying: the  {\it braid relations} among the $g_i$'s:
$$
\text{(B1)} \qquad g_i g_{i+1} g_i = g_{i+1} g_i g_{i+1} \qquad \qquad \text{and}\qquad \qquad \text{(B2)} \qquad g_i g_j = g_j g_i \quad  \text{for}\quad \vert i-j\vert >1
$$
 together with the following relations:
\begin{equation}\label{bmw1}
g_i^2 = 1 -mg_i + ml^{-1}h_i
\end{equation}
\begin{equation}\label{bmw2}
g_ih_i =  l^{-1}h_i
\end{equation}
\begin{equation}\label{bmw3}
h_ig_{i\pm 1} h_i = lh_i
\end{equation}

From the defining relations of $C_n$ we deduce that the $g_i$'s are invertible and also the following important relations:
\begin{equation}\label{bmw4}
 g_i^{-1}  = g_i- m h_i + m
\end{equation}
\begin{equation}\label{bmw5}
h_ig_i = l^{-1}h_i
\end{equation}
\begin{equation}\label{bmw6}
h_ih_j = h_jh_i, \quad\text{for $\vert i-j\vert \geq 2$}
\end{equation}
\begin{equation}\label{bmw7}
h_i^2 = y h_i
\end{equation}
where
$$
y:= 1 +\frac{l^{-1}-l}{m}
$$
Other useful relations, which can be deduced from the above relations (see \cite{we1}), are:

\begin{equation}\label{bmw8}
h_i  h_{i\pm 1}h_i = h_i
\end{equation}
\begin{equation}\label{bmw9}
g_{i\pm 1}g_ih_{i\pm 1} = h_ig_{i\pm 1}g_i = h_ih_{i\pm 1}
\end{equation}
\begin{equation}\label{bmw10}
g_{i\pm 1}h_ig_{i\pm 1} = g_i^{-1}h_{i\pm 1}g_i^{-1}
\end{equation}
\begin{equation}\label{bmw12}
g_{i\pm 1}h_ih_{i\pm 1} = g_i^{-1}h_{i\pm 1}
\end{equation}
\begin{equation}\label{bmw13}
h_{i\pm 1}h_ig_{i\pm 1} = h_{i\pm 1}g_i^{-1}
\end{equation}

\begin{defn}\rm
Let $d$ be a natural number and let $y_0:= y$ and $y_1, \ldots ,y_{d-1}$ be unspecified parameters.  The {\it $d$--framization of the algebra $C_n$}, denoted $F_{d,n}= F_{d,n}(l,m, y_0,\ldots ,y_{d-1})$,  is defined as follows. The algebra $ F_{d,n}$ is the unital (with unity 1) associative algebra over $K$, defined through three sets of generators: the two sets of generators of the algebra $ C_n$ given above, together with \lq framing generators\rq\  generators $t_1, \ldots ,t_n$, satisfying all  defining relations of $ C_n$, except the quadratic relation in Eq. \ref{bmw1}, that is, (B1), (B2), (2), (3), together with the following relations:
\begin{equation}\label{fbmw1}
t_i^d = 1, \quad  t_i t_j= t_j t_i\qquad \text{for all $ i,j$}
\end{equation}
\begin{equation}\label{fbmw2}
t_i h_i = t_{i+1} h_i, \quad h_i t_i = h_i t_{i+1}
\end{equation}
\begin{equation}\label{fbmw3}
h_i t_i^k h_i = y_k h_i \quad 0\leq k\leq d-1
\end{equation}
\begin{equation}\label{fbmw4}
t_j g_i  =  g_i t_{s_i(j)} \qquad \text{for all $ i,j$}
\end{equation}
 where $s_i(j)$ is the effect  of  the transposition $s_i=(i, i+1)$ on $j$, and the quadratic relation
\begin{equation}\label{fbmw5}
g_i^2 = (1 -m) - me_i(g_i-1)  + ml^{-1}h_i
\end{equation}
 where
\begin{equation}\label{ei}
e_i :=\frac{1}{d}\sum_{s=0}^{d-1}t_i^s t_{i+1}^{-s}
\end{equation}
\end{defn}

\begin{rem}\rm
In the case $d=1$ we have $e_i=1$, hence $F_{d,n}$ coincides with $C_n$.
\end{rem}

\begin{rem}\rm
Mapping $t_i\mapsto 1$, $g_i\mapsto g_i$ and $h_i\mapsto h_i$ defines an algebra epimorphism from  the
algebra $F_{d,n}$ onto the algebra $C_n$.
\end{rem}

\begin{prop}\label{ginvrs}
For all $i$ we have:
\begin{enumerate}
\item $e_i^2= e_i$
\item $e_i h_i = h_ie_i = h_i$
\item The elements $g_i$ are invertible and
$$
g_i^{-1} = \frac{1}{1-m}g_i - \frac{m}{1-m}g_ie_i - mh_i + me_i
$$
\end{enumerate}
\end{prop}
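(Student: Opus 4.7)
The plan is to dispatch (1) and (2) by direct computations, then use them together with the commutation $e_ig_i = g_ie_i$ (which I will establish separately) to verify a candidate inverse for $g_i$ in (3).

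For (1), I would expand $e_i^2$ as a double sum and use commutativity of $t_i, t_{i+1}$ (relation fbmw1) to collapse each term to $t_i^{r+s}t_{i+1}^{-(r+s)}$. Each residue class $k \bmod d$ is hit by exactly $d$ pairs $(r,s)$, so the double sum telescopes to $\frac{1}{d}\sum_{k}t_i^{k}t_{i+1}^{-k}=e_i$. For (2), I would first iterate fbmw2 to obtain $t_i^s h_i = t_{i+1}^s h_i$ for every integer $s$ (pushing the $t_{i+1}$'s through with the commutation of $t_i$ and $t_{i+1}$ at each step), and then compute
\begin{equation*}
t_i^s t_{i+1}^{-s} h_i = t_{i+1}^{-s}\,t_i^s h_i = t_{i+1}^{-s}\,t_{i+1}^s h_i = h_i.
\end{equation*}
Averaging over $s$ gives $e_i h_i = h_i$; the symmetric argument using $h_i t_i = h_i t_{i+1}$ delivers $h_i e_i = h_i$.

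Before attacking (3), I would record that $e_i$ and $g_i$ commute. Relation fbmw4 gives $g_i t_i = t_{i+1}g_i$ and $g_i t_{i+1} = t_i g_i$, hence $g_i t_i^s t_{i+1}^{-s} = t_{i+1}^{s}t_i^{-s}g_i$. Summing over $s$ and re-indexing $s\mapsto -s$ (using once more that $t_i$ and $t_{i+1}$ commute) identifies the resulting sum with $e_i g_i$.

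For (3) I would verify the formula directly rather than invert a formal expression. Set
\begin{equation*}
X := \tfrac{1}{1-m}g_i - \tfrac{m}{1-m}g_i e_i - m h_i + m e_i,
\end{equation*}
rewrite fbmw5 as $g_i^2 = (1-m) - m e_i g_i + m e_i + m l^{-1}h_i$, and use $g_i h_i = l^{-1}h_i$ (bmw2) together with $e_i^2 = e_i$, $e_i h_i = h_i$ and $e_i g_i = g_i e_i$ to reduce $g_i^2 e_i$ to $e_i - m e_i g_i + m l^{-1}h_i$. Substituting into $g_i X$ and collecting coefficients shows that those of $e_i g_i$, $e_i$ and $h_i$ each vanish, leaving $g_i X = 1$; the verification $X g_i = 1$ is analogous. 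The conceptual shortcut behind the shape of $X$ is that, because $e_i$ is idempotent, the element $(1-m) + m e_i = (1-m)(1-e_i) + e_i$ is invertible with inverse $\tfrac{1}{1-m} - \tfrac{m}{1-m}e_i$, and this is what lets one solve fbmw5 for $g_i^{-1}$ in closed form. The main obstacle is the coordinated bookkeeping in part (3): several relations have to be deployed in sequence, and it is essential to note at the outset that $e_i$ commutes with $g_i$, so that the apparent asymmetry of $X$ (with $g_i e_i$ rather than $e_i g_i$) does no harm.
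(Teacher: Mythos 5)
Your proof is correct and takes essentially the same route as the paper's: parts (1) and (2) by direct computation with the defining sum for $e_i$ (the paper defers (1) to a citation and derives (2) from Eq.~\ref{fbmw2}), and part (3) by verifying directly that the displayed element is a two-sided inverse of $g_i$ using (1), (2), the quadratic relation Eq.~\ref{fbmw5} and Eqs.~\ref{bmw2}, \ref{bmw5}. Your explicit derivation of the commutation $e_ig_i=g_ie_i$ from Eq.~\ref{fbmw4} supplies a step the paper uses only implicitly in that verification.
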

\begin{proof}
Claim (1) is easy to prove, see \cite{jula1}.
Claim (2) comes from Eqs. \ref{fbmw2}. Finally, using (1) and (2) of the proposition and Eq. \ref{fbmw5}, Eq. \ref{bmw2} and Eq. \ref{bmw5}, one can verify that $g_i^{-1}g_i= g_ig_i^{-1}=1$. Thus (3) is also proved.
\end{proof}

\begin{prop}\label{quintic}
The elements $g_i$  satisfy  the quartic relation
\begin{equation}\label{eqproof3}
 g_i^4 +mg_i^3 +(m-2)g_i^2+m (m-1)g_i - (m-1) = ml^{-1}\left(m+l^{-2}-1\right)h_i
\end{equation}
and this is of minimal degree not containing the generators $t_i$. Also, they satisfy the
\lq closed\rq  quintic equation,
\begin{equation}\label{quintic}
\left(x-l^{-1}\right)\left(x^4 +mx^3 + (m-2)x^2 +m (m-1)x -(m-1)\right)= 0
\end{equation}
and this is of minimal degree not containing the generators $t_i$ and $h_i$.
Notice that

\noindent $x^4 +mx^3 + (m-2)x^2 +m (m-1)x -(m-1)= (x^2+mx-1)(x^2+ m-1)$.
\end{prop}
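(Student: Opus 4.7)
My approach is to exploit the idempotent $e_i$. First I would check that $e_i$ commutes with $g_i$: since the $t_j$ commute and $t_j^d=1$, reindexing $s\mapsto -s$ in Eq.~(\ref{ei}) shows that $e_i$ is invariant under the swap $t_i\leftrightarrow t_{i+1}$, and by Eq.~(\ref{fbmw4}) conjugation by $g_i$ effects exactly this swap on the $t$'s. Together with $e_i^2=e_i$ and $e_ih_i=h_ie_i=h_i$ from Proposition~\ref{ginvrs}, this makes $e_i$ and $1-e_i$ orthogonal idempotents, each commuting with $g_i$, with $(1-e_i)h_i=0$.

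The key step is to split the quadratic relation (\ref{fbmw5}) across these idempotents. Multiplying (\ref{fbmw5}) by $1-e_i$ kills the $e_i$ and $h_i$ terms and yields
$$
(1-e_i)(g_i^2+m-1)=0,\qquad\text{equivalently}\qquad e_i(g_i^2+m-1)=g_i^2+m-1.
$$
Multiplying (\ref{fbmw5}) instead by $e_i$ and using $e_i^2=e_i$, $e_ih_i=h_i$ gives
$$
e_i\bigl(g_i^2+mg_i-1\bigr)=ml^{-1}h_i.
$$
Now I multiply this second identity on the left by $(g_i^2+m-1)$; since $g_i$ commutes with $e_i$, the left-hand side collapses via the first identity to the plain product $(g_i^2+m-1)(g_i^2+mg_i-1)$, while on the right-hand side $g_ih_i=l^{-1}h_i$ (applied twice) reduces $(g_i^2+m-1)h_i$ to $(l^{-2}+m-1)h_i$. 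Expanding the product on the left reproduces the claimed quartic (\ref{eqproof3}); the quintic then follows by multiplying the quartic by $(g_i-l^{-1})$, which annihilates $h_i$ on the right.

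For minimality, I would reason as follows. Iterating (\ref{fbmw5}) expresses every power of $g_i$ as a $K$-linear combination of the five elements $1,\,g_i,\,h_i,\,e_i,\,e_ig_i$. Assuming these are generically linearly independent in $F_{d,n}$ (which would follow, for instance, from a PBW-type basis or a faithful representation), any candidate identity $P(g_i)=\alpha+\beta h_i$ with $\deg P\le 3$ reduces to a vanishing condition on the $e_i$ and $e_ig_i$ coefficients of $P$; a short check shows these cannot both vanish nontrivially, ruling out degree $\le 3$. For the quintic, the quartic's right-hand side has nonzero coefficient $ml^{-1}(m+l^{-2}-1)$ on $h_i$ (generically in $l,m$), and to eliminate $h_i$ one must multiply by a polynomial in $g_i$ annihilating $h_i$; the minimal such is $g_i-l^{-1}$. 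The factorization $x^4+mx^3+(m-2)x^2+m(m-1)x-(m-1)=(x^2+mx-1)(x^2+m-1)$ is then immediate from the very form of the two building blocks in the idempotent-splitting argument. The main obstacle is justifying the linear independence underlying minimality; modulo that, the computation is a clean idempotent-splitting argument.
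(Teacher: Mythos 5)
Your computation is correct and reaches the paper's quartic and quintic by a genuinely cleaner route than the paper's own proof. The paper also derives the key identity $e_i(g_i^2+m-1)=g_i^2+m-1$ (its Eq.~(\ref{eqproof2})), but it then proceeds by solving for $me_i$ as an explicit polynomial in $g_i$ and $h_i$ and substituting that expression back into Eq.~(\ref{fbmw5}); the factorization $(x^2+mx-1)(x^2+m-1)$ appears there only as an afterthought. Your idempotent splitting --- $(1-e_i)(g_i^2+m-1)=0$ together with $e_i(g_i^2+mg_i-1)=ml^{-1}h_i$, then left-multiplying the second identity by $g_i^2+m-1$ and using $g_ih_i=l^{-1}h_i$ twice --- produces the quartic already in factored form and makes the structure of Eq.~(\ref{eqproof3}) transparent. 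The preliminary facts you invoke are all available: $g_ie_i=e_ig_i$ follows from Eq.~(\ref{fbmw4}) and reindexing $s\mapsto -s$ in Eq.~(\ref{ei}), and $e_i^2=e_i$, $e_ih_i=h_ie_i=h_i$ are Proposition~\ref{ginvrs}. The passage to the quintic is also fine, since $(g_i-l^{-1})h_i=0$ by Eq.~(\ref{bmw2}); the paper uses the mirror identity $h_i(g_i-l^{-1})=0$ from Eq.~(\ref{bmw5}), with the same effect.

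On the minimality assertions: you are right that a proof would require the generic linear independence of $1$, $g_i$, $h_i$, $e_i$, $e_ig_i$ (or some equivalent nondegeneracy statement), and you do not establish it. Be aware, however, that the paper's proof does not address the minimality claims at all, so your treatment is not weaker than the source on this point --- but as written, that part of your argument (like the paper's) remains an assertion rather than a proof.
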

\begin{proof}
Multiplying  Eq. \ref{fbmw5} by $e_i$ and solving with respect to $me_i(g_i-1)$, we obtain
\begin{equation}\label{eqproof1}
me_i(g_i-1) =(1-m)e_i -e_i g_i^2 + ml^{-1}h_i
\end{equation}
Also, solving $me_i(g_i-1)$ directly from Eq. \ref{fbmw5} we have
$
me_i(g_i-1) =(1-m) - g_i^2 + ml^{-1}h_i.
$
Hence
\begin{equation}\label{eqproof2}
e_i(g_i^2 + m-1)=(g_i^2 + m-1)
\end{equation}
Multiplying now Eq. \ref{fbmw5} by $g_i$ we have
$g_i^3 = (1-m)g_i - me_ig_i^2+me_ig_i+ ml^{-2}h_i$. Then
$$
g_i^3
(1-m)g_i - me_i(g_i^2 + m -1)+m^2e_i +me_i(g_i-1)+ ml^{-2}h_i
$$
so, from Eq. \ref{eqproof2} we obtain
$$
g_i^3=(1-m)g_i - m(g_i^2 + m -1)+m^2e_i +me_i(g_i-1)+ ml^{-2}h_i
$$
Replacing in this last equation the expression in Eq. \ref{eqproof1} for $me_i(g_i-1)$, we obtain
$$
g_i^3 =-(m+1)g_i^2 + (1-m)g_i + m(l^{-1}+l^{-2})h_i + (1-m^2) + m^2e_i
$$
Hence,
\begin{equation*}
me_i =\frac{1}{m}\left[g_i^3+(m+1)g_i^2 + (m-1)g_i - m(l^{-1}+l^{-2})h_i + (m^2-1) \right]
\end{equation*}
Then, replacing this last expression for $me_i$ in Eq. \ref{fbmw5}, we obtain Eq. \ref{eqproof3}.
Finally, Eq. \ref{bmw5}  says $h_i(g_i-l^{-1})=0$, and substituting $h_i$ by Eq. \ref{eqproof3} we obtain Eq. \ref{quintic}.
\end{proof}

\subsection{\it Topological interpretations}

Note that the generators $t_1, \ldots, t_n$ together with the relations in  Eq. \ref{fbmw1} form a copy of the abelian group $({\Bbb Z}/d{\Bbb Z})^n$ and recall\cite{jula1,jula2,jula3,jula4} that the modular framed braid group ${\mathcal F}_{d,n}$ is given as ${\mathcal F}_{d,n}=({\Bbb Z}/d{\Bbb Z})^n \rtimes  B_n$, where the action of $B_n$ on $({\Bbb Z}/d{\Bbb Z})^n$ is given by the permutation induced by a braid on the indices. Geometrically, elements of ${\mathcal F}_{d,n}$ are classical braids on $n$ strands with an integer modulo $d$, the framing, attached to each strand. In particular, the framing generator $t_i$ stands for the identity framed braid with framing 1 on the $i$th strand and 0 elsewhere. Looking at  our baby-example, the Y-H algebra, it is shown in \cite{jula1} that it can be defined as a quotient of a  group algebra of ${\mathcal F}_{d,n}$ over an ideal generated by the quadratic relations in the Y-H algebra.

Passing now to the BMW algebra, it is a quotient of the classical braid group $B_n$. To see this consider the algebra $C_n$  generated by the $g_i$'s only, and view Eq. \ref{bmw4} as the defining relations for the $h_i$'s. Analogously, its $d$--framization $F_{d,n}$ contains a copy of the abelian group $({\Bbb Z}/d{\Bbb Z})^n$, generated by $t_1, \ldots, t_n$. So, if we exempt the $h_i$'s from the set of generators for the algebra $F_{d,n}$ and if we consider (3) of Proposition~\ref{ginvrs} as the defining relation for the $h_i$'s, we conclude  that $F_{d,n}$ can be seen as a quotient of an appropriate group algebra of ${\mathcal F}_{d,n}$. Further, the element $h_i$ can be seen represented in the category of $(n,n)$--tangles  as the elementary tangle consisting in two curved parallel horizontal arcs joining the endpoints $i$ and $i+1$ at  the top and at the bottom of the otherwise identity tangle. So, elements in the algebra $F_{d,n}$ can be viewed as framed $(n,n)$--tangles, with framings modulo $d$.

\begin{figure}[H]
\begin{center}
\begin{picture}(220,60)
\put(-1,55){\tiny{$0$}}
\put(22,55){\tiny{$0$}}
\put(31,55){\tiny{$1$}}
\put(57,55){\tiny{$0$}}
\put(64,55){\tiny{$0$}}
\put(88,55){\tiny{$0$}}
\qbezier(0,0)(0,30)(0,50)
\put(5,25){$\ldots$}
\qbezier(25,0)(25,30)(25,50)
\qbezier(33 ,50)(45,33)(58,50)
\qbezier(33 ,0)(45,17)(58,0)
\qbezier(66,0)(66,30)(66,50)
\put(71,25){$\ldots$}
\qbezier(91,0)(91,30)(91,50)
\put(100, 25){$=$}
\put(119,55){\tiny{$0$}}
\put(142,55){\tiny{$0$}}
\put(151,55){\tiny{$0$}}
\put(177,55){\tiny{$1$}}
\put(184,55){\tiny{$0$}}
\put(208,55){\tiny{$0$}}
\qbezier(120,0)(120,30)(120,50)
\put(125,25){$\ldots$}
\qbezier(145,0)(145,30)(145,50)
\qbezier(153 ,50)(165,33)(178,50)
\qbezier(153 ,0)(165,17)(178,0)
\qbezier(186,0)(186,30)(186,50)
\put(191,25){$\ldots$}
\qbezier(211,0)(211,30)(211,50)
\end{picture}
\caption{The relation $t_ih_i= t_{i+1}h_i$}\label{fig1}
\end{center}
\end{figure}

\begin{figure}[H]
\begin{center}
\begin{picture}(220,60)
\put(-1,55){\tiny{$0$}}
\put(22,55){\tiny{$0$}}
\put(31,55){\tiny{$0$}}
\put(57,55){\tiny{$0$}}
\put(64,55){\tiny{$0$}}
\put(88,55){\tiny{$0$}}
\qbezier(0,0)(0,30)(0,50)
\put(5,25){$\ldots$}
\qbezier(25,0)(25,30)(25,50)
\put(45,25){\circle{20}}
\qbezier(33 ,50)(45,33)(58,50)
\qbezier(33 ,0)(45,17)(58,0)
\qbezier(66,0)(66,30)(66,50)
\put(71,25){$\ldots$}
\put(30,27){\tiny{$k$}}
\qbezier(91,0)(91,30)(91,50)
\put(100, 25){$=$}
\put(115,25){\tiny{$y_k$}}
\put(129,55){\tiny{$0$}}
\put(152,55){\tiny{$0$}}
\put(161,55){\tiny{$0$}}
\put(187,55){\tiny{$0$}}
\put(194,55){\tiny{$0$}}
\put(218,55){\tiny{$0$}}
\qbezier(130,0)(130,30)(130,50)
\put(135,25){$\ldots$}
\qbezier(155,0)(155,30)(155,50)
\qbezier(163 ,50)(175,33)(188,50)
\qbezier(163 ,0)(175,17)(188,0)
\qbezier(196,0)(196,30)(196,50)
\put(201,25){$\ldots$}
\qbezier(221,0)(221,30)(221,50)
\end{picture}
\caption{The relation $h_it_i^kh_i= y_kh_i$}\label{fig1}
\end{center}
\end{figure}

 The elements $e_i$ seen  as elements of ${\Bbb C}{\mathcal
F}_{d,n}$   can be interpreted geometrically as the average of the sum of $d$ identity framed braids with framings as shown below for $e_1$.

\begin{figure}[H]
\begin{picture}(320,60)

\put(0,38){$e_1 =$}
\put(37, 41){$1$}
\qbezier(36,40)(41,40)(46,40)
\put(36,30){$d$}
\qbezier(55,20)(45,40)(55,60) 
\qbezier(65,20)(65,40)(65,60)
\qbezier(80,20)(80,40)(80,60)
\qbezier(95,20)(95,40)(95,60)
\put(105,40){$+$}
\qbezier(125,20)(125,40)(125,60)
\qbezier(140,20)(140,40)(140,60)
\qbezier(155,20)(155,40)(155,60)
\put(165,40){$+$}
\qbezier(185,20)(185,40)(185,60)
\qbezier(200,20)(200,40)(200,60)
\qbezier(215,20)(215,40)(215,60)
\put(225,40){$+$}
\put(240,40){$\cdots$}
\put(260,40){$+$}

\qbezier(280,20)(280,40)(280,60)
\qbezier(295,20)(295,40)(295,60)
\qbezier(310,20)(310,40)(310,60)
\qbezier(320,20)(330,48)(320,60) 
\put(62,65){\tiny{$0$}}
\put(77,65){\tiny{$0$}}
\put(92,65){\tiny{$0$}}
\put(122,65){\tiny{$1$}}
\put(130,65){\tiny{$d-1$}}
\put(153,65){\tiny{$0$}}
\put(182,65){\tiny{$1$}}
\put(190,65){\tiny{$d-2$}}
\put(213,65){\tiny{$0$}}
\put(270,65){\tiny{$d-1$}}
\put(292,65){\tiny{$1$}}
\put(307,65){\tiny{$0$}}

\end{picture}
\caption{The element $e_1 \in {\Bbb C} {\mathcal F}_{d,3}$} \label{fig3}
\end{figure}

\subsection{\it $F_{d,n}$ is finite dimensional}

\begin{prop}
Any element in $F_{d,n}$ can be written as a  $K$--linear combination of monomials of the form $\alpha f\beta$, where $\alpha$ and $\beta$ are monomials in $1, g_1,\ldots, g_{n-2}$, $h_1, \ldots , h_{n-2}$, $t_1, \ldots ,t_{n-1}$ and $f\in X_n:=\{t_n^s, g_{n-1}, t_{n-1}^sh_{n-1}t_{n-1}^r\,;\, 1\leq r,s\leq d\}$.
\end{prop}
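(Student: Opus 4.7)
The plan is to show that the $K$-span $V$ of monomials $\alpha f \beta$ of the prescribed shape coincides with $F_{d,n}$. Since $1 = 1\cdot t_n^d \cdot 1 \in V$ (using $t_n^d=1$ with $s=d$) and $V$ is visibly a $K$-submodule, it suffices to check that $V$ is closed under left multiplication by every generator of $F_{d,n}$. For a generator $x \in \{g_i, h_i : i\le n-2\}\cup\{t_j : j\le n-1\}$, the product $x\cdot(\alpha f\beta)=(x\alpha)f\beta$ is automatically in $V$, because $x\alpha$ expands as a $K$-linear combination of monomials in the same admissible generators. The real work is therefore to handle left multiplication by $t_n$, $g_{n-1}$ and $h_{n-1}$.

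For $t_n$, I would first record the easy fact that $t_n$ commutes with every generator that can appear in $\alpha$: with each $t_j$ by (\ref{fbmw1}), with each $g_i$ for $i\le n-2$ by (\ref{fbmw4}) (since $s_i$ fixes $n$), and with each $h_i$ for $i\le n-2$ by the same observation applied through the expression of $h_i$ obtained from the quadratic relation. Hence $t_n\cdot\alpha f\beta=\alpha(t_nf)\beta$, and a direct check on the three species of $f\in X_n$ finishes the case: $t_n\,t_n^s=t_n^{s+1}$ (mod $t_n^d=1$); $t_n g_{n-1}=g_{n-1}t_{n-1}$ by (\ref{fbmw4}); and $t_n\cdot t_{n-1}^sh_{n-1}t_{n-1}^r=t_{n-1}^{s+1}h_{n-1}t_{n-1}^r$ by (\ref{fbmw2}). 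In all three cases the result is again in $V$.

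For left multiplication by $g_{n-1}$ or $h_{n-1}$ I would run a nested induction: outer on $n$ (with $n=1,2$ immediate), and inner on the \emph{top-degree} of the word, i.e.\ the total number of occurrences of $g_{n-1},h_{n-1},t_n$. Each collapse move reduces this quantity while staying in $V$. The moves I would invoke are: (i) the quadratic relation (\ref{fbmw5}), which after substituting $e_{n-1}=\frac{1}{d}\sum t_{n-1}^st_n^{-s}$ and $t_n^{-s}g_{n-1}=g_{n-1}t_{n-1}^{-s}$ rewrites $g_{n-1}^2$ as a linear combination of words with at most one top-index factor; (ii) $g_{n-1}h_{n-1}=h_{n-1}g_{n-1}=l^{-1}h_{n-1}$ from (\ref{bmw2}), (\ref{bmw5}); (iii) $h_{n-1}^2=yh_{n-1}$ from (\ref{bmw7}); (iv) the framing sandwich $h_{n-1}t_{n-1}^kh_{n-1}=y_kh_{n-1}$ from (\ref{fbmw3}); and (v) the sandwich relations $h_{n-1}g_{n-2}h_{n-1}=lh_{n-1}$ and $h_{n-1}h_{n-2}h_{n-1}=h_{n-1}$ from (\ref{bmw3}), (\ref{bmw8}).

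The main obstacle is the case where the incoming $g_{n-1}$ or $h_{n-1}$ meets $f$ only across an interposed piece of $\alpha$ involving $t_{n-1}$, $g_{n-2}$, or $h_{n-2}$, the only generators of $F_{d,n-1}$ that fail to commute with the top-index generators. I would handle this with a further induction on the length of the offending piece, reducing to the case of a single such generator. There, (\ref{fbmw4}) gives $g_{n-1}t_{n-1}=t_n g_{n-1}$, trading one obstruction for a $t_n$ that can be absorbed using the $t_n$-case above; the braid relation (\ref{bmw9}), $g_{n-1}g_{n-2}g_{n-1}=g_{n-2}g_{n-1}g_{n-2}$, transfers $g_{n-1}$ past $g_{n-2}$ while keeping the top-degree controlled; and the $h_{n-1}$-sandwich relations in (v) together with (\ref{bmw9})--(\ref{bmw13}) dispose of the $h_{n-2}$ cases. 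Each rewrite strictly decreases either the top-degree or the length of the interposed segment, so the procedure terminates and places $x\cdot\alpha f\beta$ in $V$.
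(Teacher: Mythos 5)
Your proposal is correct and, at its computational core, is the same argument as the paper's: both proceed by induction on $n$ and reduce a word containing two top-index factors (your incoming $g_{n-1}$ or $h_{n-1}$ together with the $f$ in $\alpha f\beta$; the paper's $f_1,f_2\in X_n$) to a word containing at most one, using exactly the toolbox you name --- (\ref{fbmw2})--(\ref{fbmw5}) together with (\ref{bmw2}), (\ref{bmw3}), (\ref{bmw5}) and the sliding relations (\ref{bmw8})--(\ref{bmw13}). The packaging differs: you show the span $V$ is a left ideal containing $1$ and push a single new generator through $\alpha$ by an induction on the length of the interposed segment, whereas the paper first normalizes the interposed word $M_2$ by the inductive hypothesis for $n-1$, so that it contains at most one element of $X_{n-1}$ and everything reduces to the finitely many products $f_1f_2$ and $f_1ff_2$ with $f\in X_{n-1}$; that trick makes the case analysis finite rather than an open-ended rewriting procedure. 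One small repair to your termination argument: the move $g_{n-1}t_{n-1}=t_ng_{n-1}$ \emph{increases} your stated top-degree (it creates a $t_n$), as does expanding $e_{n-1}$ in the quadratic relation, so the inner induction should be measured by occurrences of $g_{n-1}$ and $h_{n-1}$ only, with all $t_n$'s commuted to the boundary and absorbed first, exactly as in your $t_n$-paragraph; with that adjustment the procedure terminates as claimed.
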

\begin{proof}
The proof is by induction on $n$. For $n=2$ the proposition follows directly from the defining relations of $F_{d,2}$, that is, from   Eq. \ref{bmw2}, Eq. \ref{bmw3} and Eqs. \ref{fbmw1}--\ref{fbmw5}. We assume now  truth of the proposition for $2<k\leq n-1$. An arbitrary element in  $F_{d,n}$ is a $K$--linear combination of monomials $M$ in $1, g_1,\ldots, g_{n-1}$, $h_1, \ldots , h_{n-1}$, $t_1, \ldots ,t_n$. We shall check first the case where $M$ is such a monomial containing two elements $f_1, f_2$ of $X_n$. Further, by the induction hypothesis, we may assume that $M$ is in the form
$$
M= M_1f_1M_2f_2M_3
$$
where $M_i$ are monomials in $1, g_1,\ldots, g_{n-2}$, $h_1, \ldots , h_{n-2}$, $t_1, \ldots ,t_{n-1}$, each with at most one $f\in X_{n-1}$. We have two cases, according to whether $M_2$ contains or not an element  in $X_{n-1}$. In the case where $M_2$ does not have such an element, we have:
$$
M = M_1M_2f_1f_2M_3
$$
So, we must reduce $f_1f_2$ as a linear combination of monomials conforming with the statement of the proposition. We have nine  cases to reduce, but we shall consider the more representative ones, that
is the cases: $f_1=  g_{n-1}$ and  $f_2=t_{n-1}^sh_{n-1}t_{n-1}^r$. Applying  Eqs. (\ref{fbmw4}),  (\ref{bmw5}) and (\ref{fbmw2}), respectively, we obtain that $f_1f_2$ can be reduced:
$$
f_1 f_2 =  t_{n}^sg_{n-1}h_{n-1}t_{n-1}^r = l^{-1} t_{n}^sh_{n-1}t_{n-1}^r = l^{-1} t_{n-1}^sh_{n-1}t_{n-1}^r
$$

Suppose now that $M_2$ contains one element $1\not= f\in X_{n-1}$. So, we can  assume that $M$ is of the form $M= M^{\prime}f_1ff_2M^{\prime\prime}$, where $M^{\prime}, M^{\prime\prime}\in F_{d,n-1}$ and
$f_1, f_2\in X_{n}$. Hence, it is enough to show that $f_1ff_2$ is a linear  combinations of monomials as in the statement of the proposition; we shall show the reduction for only one case, as the rest of the 26 cases follow in the same way. Suppose  $f_1=f_2= g_{n-1}$ and $f=t_{n-2}^sh_{n-2}t_{n-2}^r$. We have:
\begin{eqnarray*}
f_1ff_2  =  g_{n-1}t_{n-2}^sh_{n-2}t_{n-2}^rg_{n-1}
& = &
t_{n-2}^sg_{n-1}h_{n-2}g_{n-2}t_{n-1}^r\qquad (\text{from Eq.  \ref{fbmw4}})\\
& = &
t_{n-2}^sg_{n-2}^{-1}h_{n-1}g_{n-2}^{-1}t_{n-2}^r\qquad (\text{from Eq. \ref{bmw10}})\\
& = &
g_{n-2}^{-1}t_{n-1}^sh_{n-1}t_{n-1}^rg_{n-2}^{-1}\qquad (\text{from Eq. \ref{fbmw4}})
\end{eqnarray*}
Using the expression of $g_{n-2}^{-1}$ in Proposition \ref{ginvrs}, the reduction of $f_1ff_2$ follows.
\end{proof}
\begin{cor}
$F_{d,n}$ is finite dimensional.
\end{cor}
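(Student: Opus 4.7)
The plan is to run a straightforward induction on $n$ and leverage the normal form produced by the preceding proposition. For the base case $n=1$, the algebra $F_{d,1}$ is generated by the single element $t_1$ subject to $t_1^d = 1$, so it is spanned by $1,t_1,\ldots,t_1^{d-1}$ and hence satisfies $\dim_K F_{d,1}\leq d$.

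For the inductive step I would assume $\dim_K F_{d,n-1}<\infty$ and introduce the subalgebra $A_{n-1}\subseteq F_{d,n}$ generated by $1, g_1,\ldots, g_{n-2}, h_1,\ldots, h_{n-2}, t_1,\ldots, t_{n-1}$. Comparing presentations, every defining relation of $F_{d,n-1}$ still holds among these generators inside $F_{d,n}$ (the relations of $F_{d,n-1}$ are literally a subset of those of $F_{d,n}$ restricted to the relevant indices), so the assignment of generators extends to a surjective $K$-algebra map $F_{d,n-1}\twoheadrightarrow A_{n-1}$, giving $\dim_K A_{n-1}\leq \dim_K F_{d,n-1}<\infty$. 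The preceding proposition then tells me that $F_{d,n}$ is $K$-spanned by products $\alpha f\beta$ with $\alpha,\beta\in A_{n-1}$ and $f$ ranging over the finite set $X_n$, whose cardinality is at most $1+d+d^2$. Consequently
\[
\dim_K F_{d,n} \;\leq\; |X_n|\cdot(\dim_K A_{n-1})^2 \;\leq\; (1+d+d^2)\,(\dim_K F_{d,n-1})^2 \;<\;\infty,
\]
closing the induction.

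I do not expect any genuine obstacle at this stage: the hard work is absorbed into the preceding proposition, which produces the finite \emph{middle} factor $X_n$ and confines the outer factors $\alpha,\beta$ to the lower-index subalgebra. The only point worth a line of verification is the well-definedness of the structural map $F_{d,n-1}\to F_{d,n}$ sending each generator to the generator of the same name, and, as noted, this is immediate from inspecting the two presentations side by side.
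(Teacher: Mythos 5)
Your proof is correct and is exactly the argument the paper intends (the corollary is left without an explicit proof, as it follows from the preceding proposition by the induction you describe). The one point you rightly flag --- that $\alpha,\beta$ lie in a homomorphic image of $F_{d,n-1}$, hence in a finite-dimensional subalgebra by the inductive hypothesis --- is the only detail needing verification, and your treatment of it is fine.
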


\section{Speculations}

Apart from the framization of the BMW algebra, defined in Section~2,  we give below some more algebras on which the framization can be applied, recalling first the Yokonuma--Hecke algebra, which is the framization of the classical Hecke algebra of type $A$.

\bigbreak

\noindent {\it The modular framization of the Iwahori--Hecke algebra.} This is the Yokonuma--Hecke algebra, shortened to Y--H algebra. We recall the definition of the Y--H algebras. Fix $u \in {\Bbb C}\backslash \{0,1\}$. Given two  positive integers $d$ and $n$, we denote ${\rm Y}_{d,n} =  {\rm Y}_{d,n}(u)$ the Yokonuma--Hecke algebra, which is a unital associative algebra over ${\Bbb C}$,  defined by the generators
$1, g_1, \ldots, g_{n-1}, t_1, \ldots, t_{n}$, satisfying: the  braid relations (B1), (B2) for the $g_i$'s, relations (\ref{fbmw1}), (\ref{fbmw4}) for the $t_i$'s, together with the extra {\it Yokonuma quadratic relations}:
\begin{equation}\label{yhquadr}
g_i^2 = 1 + (u-1)e_i(1-g_i)
\end{equation}
 where $e_i$ as given in Eq. \ref{ei}. In the case $d=1$, ${\rm Y}_{d,n}$  coincides with the classical Iwahori--Hecke algebra of type $A$, $H_n(q)$. That is, the Y--H algebra is built by adding framing generators to $H_n(q)$ and changing the {\it Hecke quadratic relations}:
\begin{equation}\label{hecke}
g_i^2 = q\cdot 1 + (q-1)g_i
\end{equation}
by the quadratic relations Eq. \ref{yhquadr}, which involve the framing generators. Alternatively, as shown in \cite{jula1}, the Y--H algebra can be defined as a quotient of a  group algebra of the modular framed braid group ${\mathcal F}_{d,n}$ over an ideal generated by the quadratic relations Eq. \ref{yhquadr}. 
We note that, if we considered the quotient of  a  group algebra of the classical or the modular framed braid group over the quadratic relations Eq. \ref{hecke} we would obtain nothing more interesting, comparing to the much richer structure of the Y--H algebra.

Using  the Y--H algebras and a Markov trace constructed on them\cite{ju} we have constructed invariants for oriented framed knots\cite{jula1,jula3},  classical knots\cite{jula4}  and  singular knots\cite{jula2}. In the Y--H algebra the following closed cubic relation is satisfied, not involving the framing generators, which gives rise to a cubic skein relation for the invariant of classical knots. 
\begin{equation}\label{cubic1}
g_i^3 = -u g_i^2 + g_i +u
\end{equation}

\bigbreak

\noindent {\it The modular framization of the Temperley--Lieb algebra.} In \cite{jugoula} we define the framization of the classical Temperley--Lieb algebra, ${\rm TL}_{d,n}(u)$, the Yokonuma--Temperley--Lieb algebra, ${\rm YTL}_{d,n}(u)$, as follows.

\begin{defn}\rm The Yokonuma--Temperley--Lieb algebra, ${\rm YTL}_{d,n}(u)$, is defined as the following quotient of the Yokonuma-Hecke algebra:

$$
{\rm YTL}_{d,n}(u) = \frac{ {\rm Y}_{d,n}(u)}{ \langle g_ig_jg_i + g_ig_j + g_jg_i +g_i +g_j +1,\quad |i-j|=1  \rangle}
$$
\end{defn}

In \cite{jugoula} we also find appropriate inductive bases for the Yokonuma--Temperley--Lieb algebras and we construct a Markov trace on them. Using this trace we define topological invariants for various types of knots, in analogy to the case of the Y--H algebras.

\bigbreak

\noindent {\it The modular framization of the singular Hecke algebra.} A definition of the singular Hecke algebra, denoted $SH_n(q)$, was proposed by Paris and Rabenda \cite{para}. This algebra is defined  as the quotient of the group algebra of the singular  braid monoid  $SB_n$ over the Hecke algebra quadratic relations Eq. \ref{hecke}. Recall that the  monoid $SB_n$ is defined by:  the unit $1$,  the classical elementary braids $\sigma_i$ with their inverses $\sigma_i^{-1}$, $1\leq i\leq n-1$, which are subject to the braid relations (B1), (B2), and by the elementary singular braids $\tau_i$, $1\leq i\leq n-1$, together with the following relations:
\begin{equation}\label{hesin}
\begin{array}{rll}
\lbrack\sigma_i, \tau_j\rbrack
=
 \lbrack\tau_i, \tau_j\rbrack & =  &0 \quad \text{for}\quad \vert i-j\vert  >1 \\
   \lbrack\sigma_i, \tau_i \rbrack
& = &
0   \quad \text{for all}\quad i \\
 \sigma_i \sigma_j \tau_i
 &  = &
   \tau_j \sigma_i \sigma_j  \quad \text{for}\quad \vert i-j \vert = 1
\end{array}
\end{equation}
Keeping the same notation for $\tau_i$ in the quotient $SH_n(q)$ and corresponding $g_i$ to $\sigma_i$, the  singular Hecke algebra $SH_n(q)$ is the complex associative unital algebra defined by the generators $1, g_1\ldots g_{n-1}$,  $\tau_1, \ldots \tau_{n-1}$ with the
relations Eq. \ref{hecke} together with the relations in Eq. \ref{hesin},  placing $g_i$ instead of $\sigma_i$.

\begin{defn}\rm
Let $d$ be a natural number.  The {\it $d$--framization of the algebra $HS_n(u)$}, denoted $FS_{d,n}= FS_{d,n}(u)$, is defined as follows. The algebra $ FS_{d,n}$ is the unital (with unity 1) associative algebra over $\Bbb C$, defined through three sets of generators: the two sets of generators of the algebra $ SH_n(q)$ given above, together with generators $t_1, \ldots ,t_n$, satisfying all  defining relations of $SH_n(u)$, except the quadratic relation in Eq. \ref{hecke}, together with the relations of Eqs. \ref{fbmw1}, (\ref{fbmw4}) and the Yokonuma quadratic relations in Eqs. \ref{yhquadr}.
\end{defn}

\noindent {\it The modular framization of  $B$--type Hecke algebras.} Recall that the  Artin braid group of type $ B$, which we denote by $B_{1,n}$, is defined  by generators $T, \sigma_1, \ldots, \sigma_{n-1}$, satisfying the  braid relations (B1), (B2) for the $\sigma_i$'s and the following {\it $B$--type relations}:
\begin{equation}\label{brels}
\sigma_1T\sigma_1T  =  T\sigma_1T\sigma_1, \quad  \sigma_i T  =  T \sigma_i \qquad \text{if  \ $i>1$}
\end{equation}
For $q,Q \in {\Bbb C}\backslash \{0,1\}$, the classical Iwahori--Hecke algebra of type $B$, $ H_n(q,Q)$, can be seen as a
quotient of the group algebra  $\Bbb C B_{1,n}$  by factoring
out the ideal generated by Eqs. \ref{hecke},  where $g_i$ denotes the image of $\sigma_i$ in  $ H_n(q,Q)$, and the  relation:
\begin{equation}\label{btype}
 T^2=(Q-1)T+Q
\end{equation}
Further, for $q,u_1, \ldots, u_r \in {\Bbb C}\backslash \{0,1\}$, the cyclotomic Hecke algebra of type $B$ and of degree $r$, $H_n(q,r)$, can be defined as the quotient of the group algebra  $\Bbb C B_{1,n}$  by factoring
out the ideal generated by the  relations Eqs. \ref{hecke}  and the {\it cyclotomic relation}:
\begin{equation}\label{cyclot}
(T-u_1)(T-u_2)\cdots(T-u_r)=0
\end{equation}
Finally, the generalized Hecke algebra of type $B $, $H_n(q,\infty)$, is defined\cite{la2} as the quotient of the group algebra  $\Bbb C B_{1,n}$  over the relations Eqs. \ref{hecke} only. In \cite{la1,la2} Markov traces are constructed on all these algebras, giving rise to Jones--type invariants of knots in the solid torus.

\begin{defn}\rm
The {\it framed braid group of type} $B$, ${\mathcal F}_{1,n}$, is defined as ${\mathcal F}_{1,n}={\Bbb Z}^n \rtimes B_{1,n}$, where the action of $B_{1,n}$ on ${\Bbb Z}^n$ is given by the permutation induced by a braid of type $B$ on the indices. Geometrically, elements of ${\mathcal F}_{1,n}$ are  braids on $n+1$ strands with the first strand fixed and with an integer, the framing, attached to each one of the rest $n$ strands.
 The {\it modular framed braid group of type $B$}, ${\mathcal F}_{d,1,n}$, is defined similarly, except that $B_{1,n}$ acts on  $({\Bbb Z}/d{\Bbb Z})^n$ and the framings are modulo $d$. So, the group ${\mathcal F}_{d,1,n}$ is defined  by generators $T, \sigma_1, \ldots, \sigma_{n-1}, t_1,\ldots, t_n$, satisfying the  braid relations (B1), (B2) for the $\sigma_i$'s, relations  Eqs. \ref{brels} for the generator $T$ and  relations (\ref{fbmw1}), (\ref{fbmw4}) for the framing generators $t_i$'s.
\end{defn}

\begin{defn}\rm
Let $d$ be a natural number.  The {\it $d$--framization of the Iwahori--Hecke algebra of $B$--type} $H_n(q,Q)$, denoted $FH_{d,Q,n}= FH_{d,Q,n}(u)$, is defined as the quotient of the modular framed braid group of type $B$, ${\mathcal F}_{d,1,n}$, over the quadratic relations in Eq. \ref{btype} and Eqs. \ref{yhquadr}. The {\it $d$--framization of the cyclotomic Hecke algebra $H_n(q,r)$}, denoted $FH_{d,r,n}= FH_{d,r,n}(u)$, is defined as the quotient of the modular framed braid group of type $B$, ${\mathcal F}_{d,1,n}$, over the quadratic relations in Eq. \ref{cyclot} and Eqs. \ref{yhquadr}. Finally, the {\it $d$--framization of the generalized Hecke algebra of type $B$, ${\mathcal H}_n(q,\infty)$}, denoted $FH_{d,n}= FH_{d,n}(u)$, is defined as the quotient of the modular framed braid group of type $B$, ${\mathcal F}_{d,1,n}$, over the quadratic relations in Eqs. \ref{yhquadr}.
\end{defn}
$B$--type framizations are studied in \cite{jula5}, including the $B$--type Temperley--Lieb algebras and $B$--type BMW algebras.

\end{document}